\newtheorem{thm}{Theorem}
\newtheorem{lemma}[thm]{Lemma}
\newtheorem{conjecture}{Conjecture}
\renewcommand{\P}{\mathbf P}
\begin{document}

\author{R\'eka Szab\'o, Daniel Valesin}
\address{University of Groningen, Nijenborgh 9, 9747 AG Groningen, The Netherlands}
\email{r.szabo@rug.nl, d.rodrigues.valesin@rug.nl}

\title[Removal of an edge in the contact process]{From survival to extinction of the contact process by the removal of a single edge}

\begin{abstract}
We give a construction of a tree in which the contact process with any positive infection rate survives but, if a certain privileged edge $e^*$ is removed, one obtains two subtrees in which the contact process with infection rate smaller than $1/4$ dies out.
\end{abstract}

\subjclass[2010]{}

\keywords{}

\maketitle

\section{Introduction}
In this paper, we present an example of interest to the discussion of how the behaviour of interacting particle systems can be affected by local changes in the graph on which they are defined.

The \textit{contact process} on a locally finite and connected graph $G=(V,E)$ with rate $\lambda\geq 0$ is a continuous-time Markov process $(\xi_t)_{t\geq 0}$ with state space $\{0,1\}^V$ and generator
\begin{equation}\label{eq:generator}\mathcal{L}f(\xi) = \sum_{\substack{x \in V:}\\\xi(x) = 1} \left[f(\xi^{0\to x}) - f(\xi)  + \lambda \cdot \sum_{y \in V: \{ x,y \}\in E}  \left(f(\xi^{1\to y}) - f(\xi) \right)\right],\end{equation}
where $f$ is a local function, $\xi \in \{0,1\}^V$ and, for $i \in \{0,1\}$ and $z \in V$,
$$\xi^{i \to z}(w) = \begin{cases} i,&\text{if } w= z;\\\xi(w),&\text{otherwise},\end{cases}\qquad w \in V.$$
This process is usually seen as a model of epidemics: vertices are individuals, which can be healthy (state 0) or infected (state 1); infected individuals recover with rate 1 and transmit the infection to each neighbor with rate $\lambda$. A comprehensive exposition of the contact process can be found in \cite{L99}.

Given $A \subset V$, we denote by $(\xi^A_t)_{t \ge 0}$ the contact process with  initial configuration $\xi^A_0 = \mathds{1}_A$, the indicator function of $A$; if $A = \{x\}$, we write $\xi^x_t$ instead of $\xi^{\{x\}}_t$. We abuse notation and associate a configuration $\xi \in \{0,1\}^V$ with the set $\{x:\xi(x) = 1\}$. 

The contact process admits a well-known \textit{graphical construction}, which we now briefly describe. We let $\P^\lambda_G$ be a probability measure under which a family of independent Poisson point processes on $[0,\infty)$ are defined:
\begin{align*}
&D_x\text{ for } x \in V,\; \text{each with rate 1},\\
&D_{(x,y)}\text{ for } x,y \in V \text{ with }\{x,y\} \in E, \; \text{each with rate }\lambda;
\end{align*}
we regard each $D_x$ and $D_{(x,y)}$ as a random discrete subset of $[0,\infty)$. Given a realization of all these processes, an \textit{infection path} is a function $\gamma: [t_1,t_2] \to V$ which is right continuous with left limits and satisfies, for all $t \in [t_1, t_2]$,
$$t \notin D_{\gamma(t)}  \qquad \text{ and }\qquad  \gamma(t) \neq \gamma(t-) \text{ implies } t \in D_{(\gamma(t-),\gamma(t))}.$$
We say that two points $(x,s), (y,t) \in V\times [0,\infty)$ with $s \leq t$ are connected by an infection path if there exists an infection path $\gamma: [s,t] \to V$ with $\gamma(s) = x$ and $\gamma(t) = y$. This event is denoted by $\{(x,s) \leftrightarrow (y,t)\}$. Then, given $A \subseteq V$, the process
$$\xi^A_t(x) = \mathds{1}\{\exists y \in A: (y,0) \leftrightarrow (x,t) \}$$
has the distribution of the contact process with initial configuration $\mathds{1}_A$. 

To motivate our result, we will now state some facts which follow immediately either from the generator expression \eqref{eq:generator} or the graphical construction.  First, the ``all healthy'' configuration, represented as the empty set $\varnothing$, is a trap state for the dynamics. Second,
\begin{equation}\label{eq:x_to_y}
\P_G^\lambda\left(\exists t: \xi^A_t = \varnothing\right) \geq \P_{G'}^{\lambda'}\left(\exists t: \xi^B_t = \varnothing\right)\quad \text{ if } \lambda \leq \lambda',\;A \subseteq B \text{ and } G \subseteq G'
\end{equation}
($G \subseteq G'$ means that the vertex set and edge set of $G$ are respectively contained in the vertex set and edge set of $G'$). Third (using the fact that $G$ is connected),
\begin{equation}
\label{eq:trans_x_y}\P_G^\lambda\left(\exists t: B\subseteq \xi^A_t \right) > 0 \quad \text{ for all finite } A,B\subseteq V.
\end{equation}
Combining \eqref{eq:x_to_y} and \eqref{eq:trans_x_y}, it is seen that the probability
\begin{equation}\label{eq:prob_ext}\P_G^\lambda\left(\exists t \geq 0:\;\xi^A_t = \varnothing\right)\end{equation}
is either equal to 1 for any finite $A \subseteq V$ or strictly less than 1 for any finite $A \subseteq V$. The process is said to \textit{die out} (or \textit{go extinct}) in the first case and to \textit{survive} in the latter.

Whether one has survival or extinction may depend on both $G$ and $\lambda$, so one defines the critical rate
$$\lambda_c(G) = \sup\{\lambda: \P^\lambda_G\left(\exists t:\;\xi^A_t = \varnothing\right) = 1 \; \forall A \subseteq V, \;A \text{ finite}\}.$$
It follows from this definition and \eqref{eq:x_to_y} that the process dies out when $\lambda < \lambda_c$ and survives when $\lambda > \lambda_c$.

It is natural to expect that the critical rate $\lambda_c$ of the contact process is not affected by local changes on $G$, such as the addition or removal of edges (as long as $G$ remains connected). More precisely,
\begin{conjecture} [Pemantle and Stacey, \cite{PS01}]\label{conj1} 
Assume $G = (V,E)$ and $G' = (V,E')$ are two connected graphs with the same vertex set $V$ and $E' = E \cup \{\{x,y\}\}$, with $x,y \in V$. Then, $\lambda_c(G) = \lambda_c(G')$.
\end{conjecture}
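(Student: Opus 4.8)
My plan would be to split the equality into its two inequalities. The bound $\lambda_c(G')\le\lambda_c(G)$ is immediate from monotonicity: since $G\subseteq G'$, \eqref{eq:x_to_y} applied with $\lambda=\lambda'$ and $A=B$ gives $\P_G^\lambda(\exists t:\xi_t^A=\varnothing)\ge\P_{G'}^\lambda(\exists t:\xi_t^A=\varnothing)$ for every finite $A$, so any rate at which the process dies out on $G'$ is also a rate at which it dies out on $G$. (If $V$ is finite both critical values equal $\infty$, so I would at once reduce to $V$ infinite, the only nontrivial case.) The real content is the reverse bound $\lambda_c(G)\le\lambda_c(G')$: fixing any $\lambda>\lambda_c(G')$, so that the contact process survives on $G'$ at rate $\lambda$, one must deduce that it survives on $G$ at the same rate.

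For this direction I would attempt a \emph{path-surgery coupling}. Since $G$ is connected, fix a self-avoiding path $x=v_0\sim v_1\sim\cdots\sim v_k=y$ in $G$ and a small $\delta>0$. Run the graphical construction on $G'$, and whenever an infection path on $G'$ uses the extra edge $\{x,y\}$ at a time $s$, try to reproduce the effect of that crossing inside $G$ by a fast \emph{relay}: a cascade of transmissions $v_0\to v_1\to\cdots\to v_k$ confined to the window $[s,s+\delta]$ and running over sites that happen to be healthy at the relevant instants. Each relay succeeds with a probability bounded below by a constant $p=p(\lambda,\delta,k)>0$. I would then keep only the \emph{good} crossings --- those whose relay succeeds --- together with all infection paths of $G'$ that never touch $\{x,y\}$, which already live in $G$, and try to show that the infection on $G$ assembled from these pieces still percolates through space-time. (Crossings close together in space-time interfere, since they compete for the same path sites, so part of the work is to discard all but a well-separated subfamily.)

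To turn percolation of the good pieces into survival I would run a renormalization/block argument: encode survival on $G'$ as a supercritical family of ``good'' space-time boxes through which the infection spreads, project this family onto $G$, thin it by the relay-success events, and --- taking $\delta$ small so that the thinning is a small, finite-range perturbation --- compare with supercritical oriented percolation in the standard way. The hard part, and the reason this conjecture is (to my knowledge) still open, is precisely this last step: ``survival at rate $\lambda$'' is a priori only the qualitative statement that the extinction probability is $<1$, and it does not by itself supply the \emph{quantitative, spatially localized} survival estimates --- of the kind available deep in the supercritical phase on $\mathbb Z^d$, or via complete-convergence-type results --- that a block construction needs in order to absorb the losses from the rerouting; upgrading bare survival to such a strong statement on an arbitrary connected graph is not known to be possible. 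Any successful proof must use connectedness of $G$ quantitatively: the tree $T$ built in this paper carries a surviving contact process for \emph{every} $\lambda>0$, while each of the two components of $T\setminus\{e^*\}$ has critical rate at least $1/4$, so over a disconnected base graph the analogue of the conjecture simply fails, and it is the existence of a connecting path --- and control of its length --- that a proof would have to exploit.
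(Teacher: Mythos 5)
The statement you were asked about is not proved in this paper at all: it is stated as an open conjecture of Pemantle and Stacey, with the only known progress being Jung's proof for vertex-transitive graphs, and the paper explicitly says the general case remains open. So there is no paper proof to compare your attempt against, and your proposal does not close the gap either. The first half of your argument is fine but contains only the trivial direction: $G\subseteq G'$ and \eqref{eq:x_to_y} give $\lambda_c(G')\le\lambda_c(G)$, and the reduction to infinite $V$ is harmless. Everything of substance lies in the reverse inequality, and there your text is a heuristic sketch that, by your own admission, breaks down exactly where it must: the renormalization step needs quantitative, localized survival estimates for the contact process on an arbitrary connected graph at an arbitrary supercritical rate, and no such upgrade of bare survival is known. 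Announcing that step as ``the hard part'' does not discharge it; as written, the proposal establishes nothing beyond the easy monotonicity bound.

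Beyond the admitted gap, the path-surgery scheme itself has unresolved problems. The relay events are built from the same graphical construction that drives the infection you are trying to reproduce, so they are not independent of it, and ``keeping only the good crossings'' is not a monotone operation on infection paths: survival on $G'$ may genuinely require infinitely many traversals of $\{x,y\}$ (the very tree constructed in this paper, with $e^*$ in place, is an example where every infinite infection path must cross the special edge infinitely often), and the failures of infinitely many relays, each succeeding only with probability $p<1$, compound rather than average out unless one already has the block-type estimates you concede are unavailable. Your closing observation is also slightly miscast: the paper's example does not show a ``disconnected analogue'' of the conjecture failing, since the conjecture requires both $G$ and $G'$ to be connected; rather, it shows that joining two connected graphs by one edge can strictly lower the critical value, which is consistent with the conjecture but illustrates why any proof must exploit the pre-existing connecting path in $G$ quantitatively. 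In short: correct easy direction, correct identification of why the problem is hard, but no proof — which is the expected outcome, since the statement is an open conjecture.
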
 
Jung \cite{J05} proved this conjecture for vertex-transitive graphs (a graph $G$ is vertex transitive if, for any two vertices $x$ and $y$, there exists an automorphism in $G$ that maps $x$ into $y$). Proving the conjecture in full generality is still an open problem.

Rather than making progress on this problem, we consider a slightly different line of inquiry. Let $G_1 = (V_1, E_1)$ and $G_2 = (V_2, E_2)$ be two graphs with disjoint  vertex sets $V_1$ and $V_2$. Let $x \in V_1$, $y \in V_2$ and define $G = (V,E)$ with $V = V_1 \cup V_2$ and $E = E_1 \cup E_2 \cup\{\{ x,y\} \}$ (that is, we connect the two graphs using the edge $\{ x,y\}$). It follows from \eqref{eq:x_to_y} that $\lambda_c(G) \leq \min(\lambda_c(G_1),\lambda_c(G_2))$, and it is natural to ask whether or not the inequality can be strict. Strict inequality would mean that, for some $\lambda < \min(\lambda_c(G_1),\lambda_c(G_2))$, the contact process with rate $\lambda$ on $G$ survives. This leads to a curious situation: since the process is subcritical on both $G_1$ and $G_2$, under $\P^\lambda_G$ there are almost surely no infinite infection paths entirely contained  either in  $G_1$ or in $G_2$, so any infinite infection path in $G$ needs to traverse the edge $\{ x,y\}$ infinitely many times.

We present an example of a graph in which this situation indeed occurs:


\begin{thm}\label{thm:main}
There exists a tree $G = (V,E)$ with a privileged edge $e^*$ so that
\begin{equation}\label{eq:thmeq1}\lambda_c(G) = 0\end{equation}
and, letting $G_1$, $G_2$ be the two subgraphs of $G$ obtained by removing $e^*$, we have 
\begin{equation} \lambda_c(G_1), \lambda_c(G_2) \geq \frac14.\label{eq:thmeq2}\end{equation}
\end{thm}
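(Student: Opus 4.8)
The plan is to realise $G$ as a ``chain of amplifiers'': a distinguished backbone, symmetric about the edge $e^*$, carrying a sequence of ``big stars'' $S_{m_1},S_{m_2},\dots$ (vertices of degree $m_k$ with $m_k$ pendant leaves, $m_k\uparrow\infty$), the stars being placed at carefully chosen, rapidly growing distances and $e^*$ sitting at the centre, so that the two trees $G_1,G_2$ obtained after its removal are mirror images of each other, rooted at the endpoints $u\in G_1$, $v\in G_2$ of $e^*$. The two basic quantitative inputs I would isolate first, with explicit constants and uniformly in the scale, are: (i) the contact process on a star $S_m$, started from the centre, stays ``lit'' (centre plus a positive fraction of the leaves infected) for a time $\ge \exp(c(\lambda)m)$ with probability bounded below, and while lit it keeps its centre infected a positive fraction of the time, hence emits infection down any attached path at rate of order $\lambda$; and (ii) on a bare path of length $\ell$, the probability that an infection started at one end ever reaches the other decays like $\rho(\lambda)^{\ell}$.

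For \eqref{eq:thmeq1}, fix $\lambda>0$ and run a one–dimensional renormalisation along the backbone. Calling a star \emph{active} if it is lit, I would show that, once $m_k$ is large enough in terms of $\lambda$, an active $S_{m_k}$ will — before it dies, which by (i) takes time $\ge e^{c(\lambda)m_k}$ — activate the next amplifier \emph{on the far side of $e^*$} (the one reached by crossing $e^*$) with probability $\to1$ as $k\to\infty$, and symmetrically. The crossing of $e^*$ is used essentially: the backbone is engineered so that the only pairs of amplifiers which are close relative to their sizes, and hence can re-ignite one another and sustain an infinite self-feeding cascade, are those joined through $e^*$, alternating between the two halves. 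Since the one-step success probabilities tend to $1$, the event that this alternating chain of re-ignitions continues forever has positive probability once some deep-enough $S_{m_{k_0(\lambda)}}$ has been lit, and reaching that star from a single seed has positive (if tiny) probability; hence the process survives and $\lambda_c(G)=0$.

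For \eqref{eq:thmeq2}, fix $\lambda<1/4$ and delete $e^*$, which removes precisely the bridges used above: inside $G_i$ the big stars are now separated by paths so long, relative to their sizes, that no star can ignite another before dying. The threshold $1/4$ enters through the ``skeleton'' of $G_i$ — the tree obtained by deleting all star-leaves — which has essentially bounded geometry: a weighted first-moment estimate for the number of infection paths from the origin reaching distance $n$ decays geometrically once $\lambda<1/4$ (geodesics never enter pendant leaves, so the leaves do not inflate the relevant branching, and the long inter-star paths are what keep the branching below the value giving $1/4$). Upgrading this to extinction requires (a) incorporating the stars: by (i) a star once reached lives only the finite (albeit huge) time $e^{c(\lambda)m_k}$, and by a subcritical-branching comparison built on the same first-moment bound the expected total number of big stars ever ignited, starting from one of them, is $<1$, so almost surely only finitely many are ignited; and (b) concluding that the infection is therefore almost surely confined to a finite region, on which it dies out. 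Hence $\lambda_c(G_i)\ge1/4$.

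The main obstacle is the simultaneous calibration of the star sizes $\{m_k\}$ and the backbone distances so that both regimes hold at once: the stars must be large enough relative to their distance to $e^*$ that the alternating cross-$e^*$ cascade in $G$ fires for \emph{every} $\lambda>0$, while in a single half $G_i$ — where the only link between the two sides is gone — consecutive amplifiers must be far enough apart that no ignition between them is possible for $\lambda<1/4$. These demands pull in opposite directions, and reconciling them is exactly what makes $e^*$ load-bearing; it forces the geometry to be arranged so that ``closeness relative to size'' of consecutive amplifiers is available only by passing through $e^*$, not within either half. The remaining technical work is to turn the heuristics (i) and (ii) into precise, scale-uniform estimates, and to promote the first-moment bounds in the extinction argument to the almost-sure confinement statement.
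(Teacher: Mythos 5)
Your construction and your survival argument coincide with the paper's: the graph is a copy of $\mathbb{Z}$ decorated with stars of rapidly growing size, interleaved so that the $i$-th and $(i{+}1)$-st stars lie on opposite sides of $e^*$ at a distance that is small compared with the $i$-th star's degree, while consecutive stars on the \emph{same} side are far apart compared with their degrees; survival for every $\lambda>0$ then follows exactly as you describe, since a star of degree $d$ sustains the infection for time $e^{c\lambda^2 d}$ and therefore relights the next star across $e^*$ with probability tending to $1$. The calibration you single out as the ``main obstacle'' is not really in tension: the paper resolves it with one inductive formula, $d_i=i\,|o_i-o_{i+1}|$, with positions chosen so that the same-side gap $|o_i-o_{i+2}|$ is of order $i(d_2+\cdots+d_i)\gg d_i$.

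The gap is in the extinction half. First, your justification of the constant $1/4$ --- a first-moment count of infection paths on the ``skeleton'', with the remark that geodesics never enter pendant leaves so the leaves do not inflate the branching --- is not valid: infection paths are not geodesics, they do detour into the leaves and back, and it is precisely these detours that let a star of size $d$ keep the infection alive for time $e^{c\lambda^2 d}$ (this is why $\lambda_c(G)=0$ in the first place); a naive first-moment bound over the whole half-graph therefore does not decay for any fixed $\lambda$. Second, your two stated quantitative inputs do not suffice: (i) is only a \emph{lower} bound on the star lifetime, and (ii) is a time-free crossing estimate. What extinction actually requires is an \emph{upper} bound on the survival time on a star (the paper's Lemma \ref{lemma:starext}: extinction by time $3\log(1/\lambda)$ with probability at least $\tfrac14 e^{-16\lambda^2|S|}$, hence death by time $e^{C\lambda^2 d}$ with high probability) combined with a time-dependent crossing bound of the form $(t+1)(2\lambda)^{\ell}$ (Lemma \ref{lemma:treeext}), because a star alive for time $T$ makes order $T$ attempts at the gap, so it is the product $e^{C\lambda^2 d_i}(2\lambda)^{\ell_i}$ that must be small; you gesture at this (``before dying''), but neither ingredient appears among your inputs or in your bookkeeping. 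Moreover, the place where $\lambda\le 1/4$ genuinely enters the paper is elsewhere: a drift estimate (Lemma \ref{lem:rw_interval}, comparison with a walk jumping right at rate $\lambda$ and left at rate $1$, giving confinement probability $\ge 1/2$), combined with the star-extinction bound in a block argument at times $\tau_i=e^{\frac i2(d_2+\cdots+d_i)}$, yielding survival probability at most $2e^{-d_i}$ for every $i$. Your alternative scheme (``expected number of ignited stars $<1$, hence a.s.\ finitely many, hence confinement and death'') could plausibly be carried out, but as written it rests on the invalid skeleton first-moment step and omits the star-lifetime upper bound, so the claimed threshold $1/4$ is not actually derived.
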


We end this Introduction discussing some related works in the interacting particle systems literature (apart from the already mentioned \cite{J05}). In \cite{MSS94}, Madras, Schinazi and Schonmann considered the contact process on $\mathbb{Z}$ in  deterministic inhomogeneous environments -- for them, this means that the recovery rates (that is, the rates of transition from state 1 to state 0) are vertex-dependent and deterministic, while the infection rate is the same everywhere. Among other results, they showed that if the recovery rate is equal to 1 everywhere except for a sufficiently sparse set $S \subset \mathbb{Z}$, where it is equal to some other value $b \in (0,1)$, then the critical infection rate $\lambda_c$ is the same as that of the original process on $\mathbb{Z}$. In \cite{NV96}, Newman and Volchan studied a contact process on $\mathbb{Z}$ in an environment in which the recovery rates are chosen randomly, independently among the vertices (the infection rate is again constant). They give a condition for the recovery rate distribution under which the process survives for any value of the infection rate (similarly to what happens to our graph $G$ of Theorem \ref{thm:main}). In  \cite{H03}, Handjani exhibited a modified version of  the voter model (which is another class of interacting particle system) in which modifications of the flip mechanism in a single site can change the probability of survival of the set of 1's from zero to positive.  

\section{Notation and preliminary results}
\label{sec:not}
Given a set $A$, the indicator function of $A$ is denoted $\mathds{1}_A$ and the cardinality of $A$ is denoted $|A|$.

Given a graph $G = (V,E)$, the degree of  $x\in V$ is denoted $\deg_G(x)$, the graph distance between  $x, y\in V$ is $\text{dist}_G(x,y)$ and the ball of radius $R$ with center $x$ is $B_G(x, R)$. We omit $G$ from the notation when it is clear from the context. We sometimes abuse notation and associate $G$ with its set of vertices (so that, for example, $|G|$ denotes the number of vertices of $G$).  A \textit{star graph} $S$ with hub $o$ on $n$ vertices is a tree with one internal node ($o$) and $n-1$ leaves. 

We always assume that the contact process is constructed from the graphical construction. Given $A, B\subseteq V$, $J_1, J_2\subseteq[0, \infty)$, we write $A\times J_1\leftrightarrow B\times J_2$ if $(x, t_1)\leftrightarrow(y, t_2)$ holds for some $x\in A$, $y\in B$, $t_1\in J_1$ and $t_2 \in J_2$.

In the remaining part of this section we will describe five preliminary results that will be needed in the proof of Theorem \ref{thm:main}. We start with the following.
\begin{lemma}\label{lem:rw_interval}
For any $\lambda \leq \frac14$, letting $I_n = \{1,\ldots, n\}$, we have
\begin{equation}\P^\lambda_\mathbb{Z}\left(\xi^{I_n}_t \subseteq I_n \; \forall t \right) \geq \frac12.\label{eq:rw_interval}\end{equation}
\end{lemma}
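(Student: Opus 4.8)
The plan is to bound the probability of the complementary event $\{\xi^{I_n}_t \not\subseteq I_n \text{ for some } t\}$ by $2\lambda$, which is at most $\tfrac12$ when $\lambda\le \tfrac14$; the two factors of $\lambda$ will come from the two ``boundary edges'' $\{0,1\}$ and $\{n,n+1\}$ of $I_n$.

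First I would observe that, since $\xi^{I_n}_0=I_n$ and the infection spreads only to nearest neighbours, the infection reaches a vertex $\le 0$ only if it first reaches $0$, and reaches a vertex $\ge n+1$ only if it first reaches $n+1$. Hence, setting $\tau_L:=\inf\{t\ge 0: 0\in\xi^{I_n}_t\}$ and $\tau_R:=\inf\{t\ge 0: n+1\in\xi^{I_n}_t\}$, the bad event equals $\{\tau_L<\infty\}\cup\{\tau_R<\infty\}$. The reflection $x\mapsto n+1-x$ is an automorphism of $\mathbb{Z}$ that fixes $I_n$, swaps $0$ and $n+1$, and preserves the law of the graphical construction, so $\P^\lambda_\mathbb{Z}(\tau_L<\infty)=\P^\lambda_\mathbb{Z}(\tau_R<\infty)$; by a union bound it therefore suffices to prove $\P^\lambda_\mathbb{Z}(\tau_L<\infty)\le\lambda$.

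To get this I would track the leftmost infected vertex and exploit that it is pushed to the right. Concretely, set $M_t:=\lambda^{\min\xi^{I_n}_{t\wedge\tau_L}}$, with the convention $M_t:=0$ when $\xi^{I_n}_{t\wedge\tau_L}=\varnothing$; then $M$ is a bounded nonnegative process with $M_0=\lambda^{\min I_n}=\lambda$. I claim $M$ is a supermartingale. Writing $m:=\min\xi^{I_n}_t\ge 1$ for the current position of the left edge, the only transitions that move $\min\xi^{I_n}$ are: (i) recovery of the vertex $m$, at rate $1$, after which $\min\xi^{I_n}$ takes some value $\ge m+1$ (or the configuration empties and $M$ drops to $0$), so that $M$ changes by at most $\lambda^{m+1}-\lambda^m$; and (ii) transmission of the infection from $m$ to $m-1$, at rate $\lambda$ — the only way $\min\xi^{I_n}$ can decrease, and it decreases by exactly $1$, so $M$ changes by $\lambda^{m-1}-\lambda^m$. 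Hence the generator of $M$ at such a configuration is at most $(\lambda^{m+1}-\lambda^m)+\lambda(\lambda^{m-1}-\lambda^m)=0$. Doob's maximal inequality for the nonnegative supermartingale $M$ then yields $\P^\lambda_\mathbb{Z}(\sup_t M_t\ge 1)\le \E[M_0]=\lambda$, and since $M_{\tau_L}=\lambda^0=1$ on $\{\tau_L<\infty\}$ we get $\P^\lambda_\mathbb{Z}(\tau_L<\infty)\le\lambda$, so that $\P^\lambda_\mathbb{Z}(\xi^{I_n}_t\subseteq I_n\ \forall t)\ge 1-2\lambda\ge\tfrac12$.

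The hypothesis $\lambda\le\tfrac14$ is used only in this last step; everything before it holds for any $\lambda\in(0,1)$. The points that need care are all routine: checking that in $\mathbb{Z}$ the left edge $\min\xi^{I_n}$ can descend only along the single edge $\{m,m-1\}$ and only one step at a time (this is the only place the one-dimensional geometry enters); treating the two absorbing regimes (the process dies, or the infection escapes at time $\tau_L$) so that $M$ is genuinely a bounded càdlàg supermartingale and the generator computation, i.e. Dynkin's formula, applies; and the usual form of Doob's maximal inequality in continuous time. I do not expect a real obstacle — morally this is just the elementary estimate that a random walk on $\mathbb{Z}$ started at $1$, stepping left at rate $\lambda$ and right at rate $1$, hits $0$ with probability $\lambda$, with $\lambda^{(\cdot)}$ serving as its harmonic function and the contact-process left edge dominating (a killed, and possibly faster-upward-moving version of) such a walk.
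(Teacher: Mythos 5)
Your proof is correct and takes essentially the same route as the paper: the paper likewise controls the left and right edges of the infected region separately and bounds each escape probability by $\lambda\le\frac14$, there by stochastically dominating $R_t$ (resp.\ $L_t$) by a biased random walk and quoting the elementary gambler's-ruin computation, which your supermartingale $\lambda^{\min\xi^{I_n}_{t\wedge\tau_L}}$ together with Doob's maximal inequality simply reproves directly on the contact process. The only cosmetic differences are your use of the reflection symmetry to reduce to one side and the resulting bound $1-2\lambda\ge\frac12$ in place of the paper's $\frac34+\frac34-1=\frac12$.
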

\begin{proof}
Define
$$L_t = \inf\{x: \xi^{I_n}_t(x) = 1\},\quad R_t = \sup\{x: \xi^{I_n}_t(x) = 1\},\quad t \geq 0,$$
with $\inf \varnothing = \infty$ and $\sup \varnothing = -\infty$. It is readily seen that $R_t$ is stochastically smaller than the continuous-time Markov chain $(X_t)$ on $\mathbb{Z}$ with $X_0 = n$ which jumps one unit to the right with rate $\lambda$ and jumps one unit to the left with rate 1. Hence,
$$\P^\lambda_\mathbb{Z}\left(R_t < n+1 \;\forall t \right) \geq \mathbb{P}\left(X_t < n+1 \;\forall t \right) \geq \frac34,$$
by an elementary computation for biased random walk on $\mathbb{Z}$.
We similarly have $\P^\lambda_\mathbb{Z}\left(L_t > 0 \;\forall t \right) \geq  \frac34$, so
$$\P^\lambda_\mathbb{Z}\left(\xi^{I_n}_t \subseteq I_n \; \forall t \right)  =  \P^\lambda_\mathbb{Z}\left(R_t < n+1 \text{ and } L_t > 0 \;\forall t \right) \geq \frac12.$$
\end{proof}

Our remaining four preliminary results are taken from \cite{MVY13}. The following shows that the contact process survives on a large star graph $S$ for a time that is exponential in $\lambda^2 |S|$. It is a refinement of the first result to this effect that appeared in \cite{BBCS05} in Lemma 5.3.   
\begin{lemma}(\cite{MVY13}, Lemma 3.1)\label{lemma:starsurv}
 There exists $\overline{c}>0$ such that, if $\lambda<1$, $S$ is a star with hub $o$ so that $\deg(o)>64e^2\cdot \frac{1}{\lambda^2}$ and $|\xi_0|>\frac 1 {16e}\cdot\lambda\deg(o)$, then 
\begin{equation}\label{eq:starsurv2}
 \P_S^{\lambda}(\xi_{e^{\overline{c}\lambda^2\deg(o)}}\neq\varnothing)\geq1-e^{-\overline{c}\lambda^2\deg(o)}.
\end{equation}
\end{lemma}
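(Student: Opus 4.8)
The plan is to track the number of infected leaves and show that it behaves like a biased birth--death chain that is pushed strongly away from $0$, so that reaching a low level within a unit of time has probability $e^{-c\lambda n}$; a union bound over $e^{\overline c\lambda^2\deg(o)}$ unit time windows, with the extra factor of $\lambda$ absorbed using $\lambda<1$, then gives the claim. Throughout write $n=\deg(o)$, let $H_t\in\{0,1\}$ be the state of the hub and $V_t=|\xi_t\setminus\{o\}|$ the number of infected leaves; the hypotheses give $\lambda^2 n>64e^2$, so the quantity $a:=\tfrac{1}{16e}\lambda n$ satisfies $a>\tfrac12\sqrt n$ (hence $a\to\infty$) and $e^{-\overline c\lambda^2 n}<1$, making \eqref{eq:starsurv2} non-trivial. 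By monotonicity of the graphical construction in the initial infected set we may assume $|\xi_0|=\lceil a\rceil$ with $\xi_0$ consisting of leaves only.

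The first step is to quantify the upward drift of the leaf count. While the hub is infected, $V_t$ jumps up by $1$ at rate $\lambda(n-V_t)$ and down by $1$ at rate $V_t$, so its drift is $\lambda n-(1+\lambda)V_t$, which is positive and of order $\lambda n$ whenever $V_t$ is below the equilibrium level $v^*:=\tfrac{\lambda n}{1+\lambda}\ge\tfrac12\lambda n$, itself far above $a$. Consequently, while the hub is infected, $V_t$ dominates a biased birth--death chain that relaxes towards $v^*\gg a$, and the explicit hitting-probability formula for such chains (the computation already used for the biased walk in Lemma \ref{lem:rw_interval}) shows that, started from any level at least $b:=\lceil a/2\rceil$, it reaches level $\lceil b/2\rceil$ within a unit of time with probability at most a product $\prod_v \tfrac{v}{\lambda(n-v)}$ over the relevant low levels, which is bounded by $e^{-c\lambda n}$ for a constant $c>0$ depending only on the numerical constant $\tfrac{1}{16e}$.

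The second step controls the hub. It recovers at rate $1$ and, while $V_t\ge\lceil b/2\rceil$, is reinfected at rate at least $\lambda\lceil b/2\rceil\ge\tfrac{\lambda^2 n}{64e}>e$, so its ``off'' intervals are short (stochastically dominated by exponentials of rate $>e$), and since the hub's recovery marks in a unit window form a Poisson$(1)$ set, with probability $1-e^{-c\lambda n}$ the hub is off only for a short total time there, during which $V_t$ merely decreases while on the intervening ``on'' intervals it is pulled back up towards $v^*\gg b$. Combining this with the previous step yields the key one-window estimate: conditionally on $V_k\ge b$, with probability at least $1-e^{-c\lambda n}$ we have $V_t\ge\lceil b/2\rceil$ throughout $[k,k+1]$ and $V_{k+1}\ge b$; the same argument with a possibly healthy initial hub gives the analogous statement starting from $\xi_0$. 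Iterating via the Markov property at integer times and summing the errors, one gets, for $\overline c$ small enough in terms of $c$ and using $\lambda<1$,
\[
\P_S^\lambda\big(V_t\ge\lceil b/2\rceil\ \text{ for all }t\le e^{\overline c\lambda^2 n}\big)\ \ge\ 1-\big\lceil e^{\overline c\lambda^2 n}\big\rceil e^{-c\lambda n}\ \ge\ 1-e^{-\overline c\lambda^2 n},
\]
and on this event $\xi_{e^{\overline c\lambda^2 n}}\neq\varnothing$.

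The main obstacle is the second step: making rigorous the coupling between the leaf count and a biased birth--death chain in the presence of the flipping hub. The cleanest route is to pass to the discrete-time chain $W_i:=V_{\sigma_i}$, where the $\sigma_i$ are the start times of the hub's successive on-intervals; over one step $W_i$ first evolves as the birth--death chain of the first step for an Exp$(1)$ time and is then thinned by independent survival over the following short off-interval. One then has to show that this chain has a stationary-type behaviour comfortably above $\lceil b/2\rceil$, with large-deviation rate $e^{-c\lambda n}$ for a single step to drop below that level, and to check that the constants $64e^2$ and $\tfrac{1}{16e}$ are calibrated so that all these estimates close. Everything else (the monotonicity reduction, the biased-walk hitting computation, the union bound) is routine.
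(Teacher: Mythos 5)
First, a point of reference: the paper does not prove this lemma at all --- it is quoted from \cite{MVY13} (Lemma 3.1), itself a refinement of Lemma 5.3 of \cite{BBCS05} --- so your proposal has to stand on its own. Its architecture (track the infected-leaf count $V_t$, prove a one-window estimate keeping $V_t$ above a level of order $\lambda n$ with $n=\deg(o)$, iterate over $e^{\overline{c}\lambda^2 n}$ unit windows via the Markov property and a union bound) is indeed the standard route taken in those references. The problem is that the crucial one-window estimate is both unproven --- you yourself flag ``the main obstacle is the second step'' --- and, as stated, false in its quantitative form.

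Concretely: the birth--death hitting computation of your first step is valid only \emph{conditionally on the hub remaining infected}, and that conditioning removes exactly the dominant failure mechanism. The hub recovers at rate $1$, so it turns off in essentially every unit window; during an off-interval with $V_t=v$ infected leaves, the next event is a leaf recovery with probability $\tfrac{v}{v+\lambda v}=\tfrac1{1+\lambda}$ versus hub reinfection with probability $\tfrac{\lambda}{1+\lambda}$, so the number of leaves lost before the hub is relit is geometric with parameter $\tfrac{\lambda}{1+\lambda}$, independently of $n$. Losing $\lceil b/2\rceil\asymp\lambda n$ leaves in a single off-excursion therefore has probability of order $(1+\lambda)^{-\lambda n/(64e)}=e^{-\Theta(\lambda^2 n)}$, which for $\lambda<1$ is far larger than your claimed per-window failure $e^{-c\lambda n}$ (and is only a constant when $\lambda^2 n$ is comparable to $64e^2$, the boundary of the hypothesis); likewise your assertion that with probability $1-e^{-c\lambda n}$ the hub is off ``only for a short total time'' in a unit window is at the wrong scale, since a single long off-interval already has probability $e^{-\Theta(\lambda^2 n)}$. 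This is precisely why the lemma's time horizon and error probability are $e^{\pm\overline{c}\lambda^2\deg(o)}$ and not $e^{\pm\overline{c}\lambda\deg(o)}$. The good news is that the correct, weaker per-window rate $e^{-c\lambda^2 n}$ still closes the union bound after taking $\overline{c}\le c/2$; but to get it you must actually do the step you defer: combine the geometric bound above for each off-excursion with the biased birth--death estimate on the on-intervals and a control on the number of excursions per window (this, in supermartingale form, is what the proof in \cite{MVY13} does). As written, the proof is incomplete at its crux, and the intermediate claim it rests on is not true. (A minor additional slip: monotonicity lets you reduce to $\lceil\tfrac1{16e}\lambda n\rceil$ initially infected vertices, but not necessarily all leaves, since $\xi_0$ may contain the hub; this is harmless.)
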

\noindent In \cite{MVY13}, this lemma was applied to guarantee that in a connected graph $G$ an infection around a vertex with sufficiently high degree is maintained long enough to produce an infection path that reaches another vertex at a certain distance:

\begin{lemma}(\cite{MVY13}, Lemma 3.2)\label{lemma:starreach}
 There exists $\lambda_0>0$ such that, if $0<\lambda<\lambda_0$, the following holds. If $G$ is a connected graph and $x, y$ are distinct vertices of $G$ with
  \begin{displaymath}
 \deg(x)>\frac 7 {\overline{c}} \frac 1 {\lambda^2} \log \left( \frac 1 {\lambda} \right) \cdot \textnormal{dist}_G(x, y) \text{ and } \frac{|\xi_0 \cap B(x, 1)|}{\lambda |B(x, 1)|}>\frac 1 {16e},
 \end{displaymath}
 then
  \begin{equation}\label{eq:starreach}
 \P_G^{\lambda}\left( \exists t: \frac{|\xi_t \cap B(y, 1)|}{\lambda |B(y, 1)|}>\frac 1 {16e}\right) > 1-2e^{-\overline{c} \lambda^2 \deg(x)}.
 \end{equation}
\end{lemma}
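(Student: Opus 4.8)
\noindent\emph{Plan of proof.}
Write $d=\textnormal{dist}_G(x,y)$ and fix a geodesic $x=z_0,z_1,\dots,z_d=y$ in $G$. The plan is: (i) use the large degree of $x$ to keep a dense infection inside the star $B(x,1)$ alive for a time $e^{\overline{c}\lambda^2\deg(x)}$; (ii) during that time, in about $e^{\overline{c}\lambda^2\deg(x)}/(d+3)$ disjoint time windows, repeatedly try to thread an infection path along the geodesic from $x$ to $y$ and then let the infection build up around $y$; (iii) observe that the hypothesis $\deg(x)>\tfrac{7}{\overline{c}}\,\tfrac{1}{\lambda^2}\log(1/\lambda)\,d$ forces the number of windows to exceed the reciprocal of the per-window success probability by an exponential factor, so that with probability at least $1-2e^{-\overline{c}\lambda^2\deg(x)}$ some window succeeds.

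\emph{Step (i).} Shrinking $\lambda_0$ we may assume $\tfrac{7}{\overline{c}}\log(1/\lambda)>64e^2$, so that $\deg(x)>64e^2/\lambda^2$; moreover the hypothesis on $\xi_0$ gives $|\xi_0\cap B(x,1)|>\tfrac{1}{16e}\lambda|B(x,1)|\ge\tfrac{1}{16e}\lambda\deg(x)$. Hence Lemma~\ref{lemma:starsurv} applies to the star $B(x,1)$, run with the Poisson marks inherited from the graphical construction on $G$, so that under this coupling the process on the star lies below $\xi_t$. I will use the slightly stronger conclusion, visible in the proof of Lemma~\ref{lemma:starsurv} (which tracks the number of infected leaves as a random walk pushed towards $\Theta(\lambda\deg(x))$), that the event $D$ --- namely, that at every time $t\le e^{\overline{c}\lambda^2\deg(x)}$ there are at least $\tfrac{1}{16e}\lambda\deg(x)$ infected vertices in $B(x,1)$ --- has probability at least $1-e^{-\overline{c}\lambda^2\deg(x)}$. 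To avoid clashes of Poisson marks in step (ii), I run this star process on $B(x,1)$ with the leaf $z_1$ removed (it remains a star on $\ge\deg(x)-1$ leaves), so that all marks along the geodesic stay untouched.

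\emph{Steps (ii)--(iii).} Put $T'=d+3$, partition $[0,e^{\overline{c}\lambda^2\deg(x)}]$ into $N=\lfloor e^{\overline{c}\lambda^2\deg(x)}/T'\rfloor$ windows, and in the $k$-th window make the following attempt, reading only that window's Poisson marks: (a) on $D$ the hub $x$ is infected at rate $\ge\tfrac{1}{16e}\lambda^2\deg(x)\gg1$ and heals at rate $1$, so a large-deviation estimate gives that $x$ is infected for total Lebesgue measure $\ge1$ in the first two time units, and a fresh arrow $x\to z_1$ occurs while $x$ is infected --- these two events have joint conditional probability $\ge c\lambda$, where $c\in(0,1)$ is a universal constant; (b) along each of the next $d-1$ unit subintervals an infection arrow $z_j\to z_{j+1}$ occurs while $z_j$ is still infected (conditional probability $\ge c\lambda$ each, jointly independent), so the infection reaches $y$ at some time $\tau_d$; (c) on $[\tau_d,\tau_d+1]$ the vertex $y$ infects its neighbours: if $\tfrac{1}{16e}\lambda|B(y,1)|\le1$ the arrival at $y$ already suffices, and otherwise a Poisson/Binomial concentration bound gives $|\xi_t\cap B(y,1)|>\tfrac{1}{16e}\lambda|B(y,1)|$ at the end of this unit, with probability at least a universal constant $c_0>0$. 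Multiplying, conditionally on $D$ and on the past up to the start of the window the $k$-th attempt succeeds with probability at least $p:=c_0(c\lambda)^d\ge e^{-3d\log(1/\lambda)}$ for $\lambda<\lambda_0$. Since $3d\log(1/\lambda)<\tfrac{3}{7}\overline{c}\lambda^2\deg(x)$ and, for $\lambda$ small, $N\ge e^{\overline{c}\lambda^2\deg(x)/2}$ (as $d+3\le\overline{c}\lambda^2\deg(x)$), peeling the conditional expectation along the natural filtration of the graphical construction --- using that on $D$ each attempt has conditional success probability $\ge p$ and that distinct windows are built from disjoint Poisson marks --- bounds the probability that $D$ holds yet all $N$ attempts fail by $(1-p)^N\le e^{-pN}\le e^{-e^{\overline{c}\lambda^2\deg(x)/14}}$. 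Adding $\P(D^c)\le e^{-\overline{c}\lambda^2\deg(x)}$ and shrinking $\lambda_0$ one last time bounds the total failure probability by $2e^{-\overline{c}\lambda^2\deg(x)}$; as a successful attempt produces a time $t$ with $|\xi_t\cap B(y,1)|/(\lambda|B(y,1)|)>\tfrac{1}{16e}$, this is \eqref{eq:starreach}.

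I expect two points to require real care. The first is upgrading Lemma~\ref{lemma:starsurv} from ``survives'' to ``stays dense throughout an exponentially long interval'', since a naive union bound over short time scales would destroy the exponentially small error; this forces one to revisit (or rerun) the random-walk argument underlying Lemma~\ref{lemma:starsurv}, which does deliver the stronger statement. The second is the dependency bookkeeping in step (ii): one must assign to each sub-event its own Poisson processes --- the star marks away from $z_1$, the directed arrow and recovery processes along the geodesic, the arrow processes out of $y$ together with the recovery processes of its neighbours --- so that distinct windows, and also the three regions $B(x,1)$, $\{z_1,\dots,z_{d-1}\}$ and $B(y,1)$, use disjoint randomness; the short cases $d\in\{1,2\}$, where these regions overlap, are treated by the same argument after additionally deleting $z_{d-1}$ from the star. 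Everything else reduces to elementary biased-random-walk and Poisson/Binomial large-deviation estimates.
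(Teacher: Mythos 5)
The paper does not actually prove this lemma: it is quoted verbatim from \cite{MVY13} (Lemma 3.2 there), so there is no in-paper proof to compare with. Your proposal is, in outline, a reconstruction of the argument used in that reference: keep a density of order $\lambda\deg(x)$ alive on the star $B(x,1)$ for a time $e^{\overline{c}\lambda^2\deg(x)}$, make order $e^{\overline{c}\lambda^2\deg(x)}$ independent attempts to push the infection along a geodesic at cost $\lambda^{O(d)}$ per attempt, and use the hypothesis $\overline{c}\lambda^2\deg(x)>7\log(1/\lambda)\,d$ to make the number of attempts overwhelm the per-attempt failure probability; your exponent bookkeeping ($pN\geq e^{\overline{c}\lambda^2\deg(x)/14}$) is consistent.

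That said, two steps are asserted rather than established, and both are genuine gaps as written. First, the event $D$ you rely on (density at least $\tfrac{1}{16e}\lambda\deg(x)$ in $B(x,1)$ \emph{at every} $t\leq e^{\overline{c}\lambda^2\deg(x)}$, with error $e^{-\overline{c}\lambda^2\deg(x)}$) is not what Lemma~\ref{lemma:starsurv} gives, and at this exact threshold it is false: if $|\xi_0\cap B(x,1)|$ is only marginally above $\tfrac{1}{16e}\lambda|B(x,1)|$, the count dips below the threshold after the first recovery with probability that is not exponentially small. The random-walk argument does yield a uniform statement, but only at a strictly smaller maintained density (or at discrete checkpoint times), and you must actually prove that version, propagating the changed constant through step (a). Second, the phrase ``conditionally on $D$ and on the past up to the start of the window the attempt succeeds with probability $\geq p$'' is not legitimate: $D$ is measurable with respect to the \emph{entire} time interval, not the past of the window, and conditioning on a future-dependent event can bias the marks inside the window (disjointness of the geodesic and $B(y,1)$ marks does not help, because your step (a) uses the hub's infection status, which is driven by the same star marks that determine $D$). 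The standard repair — and the way \cite{MVY13} organizes the proof — is a round structure: apply the Markov property at the (stopping) times starting each round, require only that the configuration at the round start is dense, bound the per-round crossing probability using only that round's marks, and separately bound the probability that density is lost during any round. With those two repairs your sketch becomes a correct proof; without them the key multiplication $(1-p)^N$ is not justified.
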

\noindent (Note that $\overline{c}$ is the same constant that appeared Lemma \ref{lemma:starsurv}).

In the opposite direction as Lemma \ref{lemma:starsurv}, the following result bounds from below the probability that the infection disappears from a star graph within time $3\log(1/\lambda)$:
\begin{lemma}(\cite{MVY13}, Lemma 5.2) \label{lemma:starext}
 If $\lambda<\frac 1 4$ and $S$ is a star, then
 \begin{equation}\label{eq:starext}
  \P_S^{\lambda} \left( \xi^S_{3\log\left( \frac 1 {\lambda} \right)} =\varnothing \right) \geq \frac 1 4 e^{-16\lambda^2 |S|}.
 \end{equation}
\end{lemma}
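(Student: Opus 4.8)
The plan is to exhibit, inside the graphical construction, a single favourable scenario that forces $\xi^S_T=\varnothing$ for $T=3\log(1/\lambda)$ and whose probability is at least $\tfrac14e^{-16\lambda^2|S|}$ (the bound being exponentially small in $|S|$ is consistent with the process typically surviving on $S$ for a much longer time). Write $N=|S|$, let $o$ be the hub, and set $a=\log(1/\lambda)$, so $a>1$ since $\lambda<\tfrac14$. The scenario is: let the infection relax until time $a$, by which time only $O(\lambda N)$ leaves are still infected; then have the hub recover just after time $a$ and stay healthy while the surviving leaves recover, all before time $T$. The crucial point is that once the hub is healthy the leaves recover at rate $1$, so the total rate at which they can push the infection back onto $o$ is only of order $\lambda\cdot(\lambda N)$; keeping the hub healthy therefore costs $e^{-O(\lambda^2N)}$ rather than $e^{-O(\lambda N)}$, and this is the source of the $\lambda^2$ in the exponent.

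First I would bound the number of infected leaves at time $a$. For a fixed leaf $\ell$, a comparison with the two-state Markov chain that flips $0\to1$ at rate $\lambda$ and $1\to0$ at rate $1$ (via the forward inequality $\tfrac{d}{dt}\P^\lambda_S(\xi^S_t(\ell)=1)\le\lambda(1-\P^\lambda_S(\xi^S_t(\ell)=1))-\P^\lambda_S(\xi^S_t(\ell)=1)$) gives $\P^\lambda_S(\xi^S_a(\ell)=1)\le\tfrac{\lambda}{1+\lambda}+\tfrac{1}{1+\lambda}e^{-(1+\lambda)a}\le 2\lambda$, so the expected number of leaves infected at time $a$ is at most $2\lambda N$. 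Combining this with the Poisson count of hub$\to$leaf transmission marks in $[a,a+1]$, whose mean is $\le\lambda N$, a Markov bound shows that with probability at least $4/7$ the number $Z$ of leaves that are infected at \emph{some} time during $[a,a+1]$ is at most $7\lambda N$.

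Next comes the scenario itself. Let $\sigma=\inf(D_o\cap[a,\infty))$ and work on the event $A_2:=\{\sigma\le a+1\}$, which has probability $1-e^{-1}$ and is independent of $Z$; on $A_2$ we have $\sigma\le a+1\le T$ and the number $K_\sigma$ of leaves infected at time $\sigma$ is at most $Z$. Using the strong Markov property at $\sigma$, I would then require the event $A_3$ that every leaf $\ell$ infected at time $\sigma$ has its first post-$\sigma$ recovery mark $\rho_\ell\le T$ and no transmission mark $D_{(\ell,o)}$ in $(\sigma,\rho_\ell]$. A mark-by-mark induction over $(\sigma,T]$ then shows that on $A_2\cap A_3$ the hub never becomes infected again on $[\sigma,T]$ (an infected leaf can never fire onto $o$, since those marks are exactly what $A_3$ forbids, and once a leaf recovers it cannot be reinfected while $o$ is healthy); consequently every leaf infected at $\sigma$ recovers by time $\rho_\ell\le T$ and stays recovered, every other leaf stays healthy, and hence $\xi^S_T=\varnothing$. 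Conditionally on $\mathcal F_\sigma$ the events for distinct leaves are independent, and since $T-\sigma\ge 2a-1\ge a$ each has conditional probability at least $\tfrac{1-e^{-a}}{1+\lambda}=\tfrac{1-\lambda}{1+\lambda}$, so $\P(A_3\mid\mathcal F_\sigma)\ge\bigl(\tfrac{1-\lambda}{1+\lambda}\bigr)^{K_\sigma}$.

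Combining the two estimates gives
\[
\P^\lambda_S\bigl(\xi^S_T=\varnothing\bigr)\ \ge\ \P\bigl(A_2\cap\{K_\sigma\le 7\lambda N\}\bigr)\cdot\Bigl(\tfrac{1-\lambda}{1+\lambda}\Bigr)^{7\lambda N}\ \ge\ \tfrac14\,e^{-16\lambda^2N},
\]
using $\P(A_2\cap\{K_\sigma\le 7\lambda N\})\ge(1-e^{-1})\cdot\tfrac47\ge\tfrac14$ and the elementary inequality $7\lambda\log\tfrac{1+\lambda}{1-\lambda}\le\tfrac{14\lambda^2}{1-\lambda^2}\le 16\lambda^2$ for $\lambda\le\tfrac14$. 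Degenerate regimes (such as $7\lambda N<1$, which forces $K_\sigma=0$, or very small $N$) are absorbed by the same computation. I expect the main obstacle to be the simultaneous tuning of the waiting time $a$ and the constants $1$ and $7$ so that the final product really lands below $\tfrac14e^{-16\lambda^2N}$, together with the measurability bookkeeping: choosing $\sigma$ as a stopping time, checking the independence used in the Markov step, and verifying the deterministic implication ``$A_2\cap A_3\Rightarrow\xi^S_T=\varnothing$'' one mark at a time.
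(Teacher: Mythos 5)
The paper itself gives no proof of this lemma; it is imported verbatim from \cite{MVY13} (Lemma 5.2), and your argument follows essentially the same strategy as the proof there: let the star relax for time $\log(1/\lambda)$ so that only $O(\lambda|S|)$ leaves remain infected, then pay probability roughly $e^{-c\lambda\cdot\lambda|S|}$ for the hub to recover and stay healthy while those leaves recover without firing back at it, which is exactly where the $\lambda^2$ in the exponent comes from; your constants and time budget $3\log(1/\lambda)$ match. One small repair is needed: $A_2=\{\sigma\le a+1\}$ is \emph{not} independent of $Z$ as you defined it (the hub's recovery marks in $[a,a+1]$ affect which leaves get infected during that window), and this matters because a plain union bound would give only $1-e^{-1}-\tfrac37\approx 0.20<\tfrac14$. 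However, independence does hold for the dominating count $Z'=\#\{\text{leaves infected at time }a\}+\#\{\text{hub-to-leaf marks in }[a,a+1]\}$, which is measurable with respect to the construction up to time $a$ and the transmission marks only; this is precisely the quantity your expectation bound $\E Z'\le 3\lambda|S|$ controls, and on $A_2$ one still has $K_\sigma\le Z'$, so replacing $Z$ by $Z'$ restores the factor $(1-e^{-1})\cdot\tfrac47>\tfrac14$ and the rest of your computation, including $7\lambda\log\tfrac{1+\lambda}{1-\lambda}\le 16\lambda^2$ for $\lambda\le\tfrac14$, goes through.
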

It is interesting to note that it follows from this result that, if $|S|$ is large, the infection will with high probability disappear before time $\exp\left\{C\lambda^2 |S| \right\}$ for any $C > 16$. This estimate on the time until the infection disappears thus matches (except for the value of the constant in the exponential) the one that follows from Lemma \ref{lemma:starsurv}.

It follows from Lemma \ref{lemma:starsurv} that vertices of degree much larger than $\frac{1}{\lambda^2}$ will sustain the infection for a long time. The last preliminary lemma in our list deals with tree graphs in which such big vertices are absent; in this case, it is unlikely that the infection spreads:
\begin{lemma}(\cite{MVY13}, Lemma 5.1) \label{lemma:treeext}
 Let $\lambda<\frac 1 2$ and $T$ be a finite tree with maximum degree bounded by $\frac 1 {8\lambda^2}$. Then, for any $x, y \in T$ and $t > 0$,
 \begin{equation}\label{eq:treeext1}
  \P_T^{\lambda}(\xi_t^T\neq\varnothing)\leq |T|^2 \cdot e^{-t/4} \text{ and }
 \end{equation}
 \begin{equation}\label{eq:treeext2}
 \P_T^{\lambda}( \{x\}\times[0,t]\leftrightarrow\{y\}\times\mathbb{R}_+ )\leq (t+1) \cdot (2\lambda)^{\mathrm{dist}_T(x, y)}.
 \end{equation}
\end{lemma}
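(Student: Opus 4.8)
The plan is to derive both bounds from first-moment estimates on the ``skeletons'' of infection paths in the graphical construction, fed by one combinatorial fact about trees. Write $W_n(a,b)$ for the number of walks of length $n$ in $T$ from $a$ to $b$ and let $\Delta$ denote the maximum degree. \emph{Claim: if $\mathrm{dist}_T(a,b)=d$ then $W_{d+2j}(a,b)\le\binom{d+2j}{j}\Delta^{j}$ for every $j\ge 0$; in particular $W_n(a,b)\le(2\sqrt\Delta)^{n}$.} To prove this, associate with a walk $v_0,\dots,v_n$ its distance profile $\delta_i=\mathrm{dist}_T(v_i,v_n)$; since a tree is bipartite, $|\delta_i-\delta_{i-1}|=1$ at every step, so $(\delta_i)_i$ is one of at most $\binom{d+2j}{j}$ $\pm1$-sequences from $d$ to $0$, and once it is fixed the walk is determined by choosing, at each of the $j$ ascending steps, one of the at most $\Delta$ neighbours of the current vertex that are farther from $v_n$ (the descending steps being forced on a tree). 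I would record this as a lemma.

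For \eqref{eq:treeext1}, observe that $\{\xi^T_t\neq\varnothing\}=\bigcup_{z,w\in T}\{(z,0)\leftrightarrow(w,t)\}$, so by a union bound it suffices to show $\P_T^\lambda((z,0)\leftrightarrow(w,t))\le e^{-t/4}$ for each ordered pair. Such a connection is equivalent to the existence of at least one \emph{realized duration-$t$ skeleton}: a walk $z=v_0,\dots,v_k=w$ together with jump times $0<s_1<\dots<s_k<t$ such that $s_i\in D_{(v_{i-1},v_i)}$ for all $i$ and no recovery mark of $v_i$ falls in $(s_i,s_{i+1})$ for $i=0,\dots,k$ (with $s_0=0$, $s_{k+1}=t$). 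Bounding the connection probability by the expected number of realized skeletons and integrating the jump times against the transmission intensities gives $\P_T^\lambda((z,0)\leftrightarrow(w,t))\le e^{-t}\sum_{k\ge0}W_k(z,w)\,\lambda^k t^k/k!$, the factor $e^{-t}$ being the cost of avoiding recovery over the $k+1$ gaps, whose lengths sum to $t$. Inserting $W_k(z,w)\le(2\sqrt\Delta)^k$ collapses the sum to $\exp(2\sqrt\Delta\,\lambda t)$, and $\Delta\le\tfrac1{8\lambda^2}$ gives $2\sqrt\Delta\,\lambda\le\tfrac1{\sqrt2}$, so $\P_T^\lambda((z,0)\leftrightarrow(w,t))\le e^{-(1-1/\sqrt2)t}\le e^{-t/4}$. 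Summing over the at most $|T|^2$ pairs yields \eqref{eq:treeext1}.

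For \eqref{eq:treeext2}, put $q:=\P_T^\lambda(\exists u:\ (x,0)\leftrightarrow(y,u))$ and $d=\mathrm{dist}_T(x,y)$; the case $d=0$ is trivial. First I would reduce to $q$ via $\P_T^\lambda(\{x\}\times[0,t]\leftrightarrow\{y\}\times\mathbb{R}_+)\le(1+t)\,q$: on the event on the left, a witnessing infection path can be anchored to the last mark of $D_x$ preceding its starting point, or to time $0$ if there is none, which shows the event is contained in the event that for some anchor time $s'\in(\{0\}\cup D_x)\cap[0,t]$ a fresh infection path launched at $(x,s'^+)$ reaches $y$; taking expectations, the term $s'=0$ contributes $q$ and, by Mecke's formula applied to $D_x$ (a mark of $D_x$ at $s'$ does not affect the path launched there), the contribution of $s'\in D_x\cap[0,t]$ is $\int_0^t q\,ds'=tq$. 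It then remains to show $q\le(2\lambda)^d$. For this I would bound $q$ by the expected number of \emph{first-passage} skeletons from $(x,0)$ to $y$ (skeletons as above whose walk meets $y$ only at its last vertex): there is now no final gap to pay for, each walk of length $k$ contributes exactly $\lambda^{k}$, so $q\le\sum_{k}\mathrm{FP}_k(x,y)\,\lambda^k$, where $\mathrm{FP}_k$ is the number of length-$k$ first-passage walks. On a tree such a walk must make its last step from the unique neighbour $u$ of $y$ lying on the $x$--$y$ geodesic, so $\mathrm{FP}_{d+2j}(x,y)\le W_{d-1+2j}(x,u)\le\binom{d-1+2j}{j}\Delta^j$ by the claim, whence $q\le\lambda^d\sum_{j\ge0}\binom{d-1+2j}{j}(\Delta\lambda^2)^j\le\lambda^d 2^{d-1}\sum_{j\ge0}(4\Delta\lambda^2)^j\le\lambda^d 2^{d-1}\cdot 2=(2\lambda)^d$, using $4\Delta\lambda^2\le\tfrac12$. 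Combined with the reduction this gives \eqref{eq:treeext2}.

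The combinatorial claim and the geometric sums are routine; the delicate points are (i) making ``connection probability $\le$ expected number of skeletons'' rigorous, i.e.\ treating the a.s.\ finite family of realized skeletons as a point process and integrating the continuous jump times correctly, and (ii) the anchoring step producing the exact constant $1+t$, where one must check that conditioning on a recovery mark of $D_x$ at the anchor time is harmless for the path it launches. I expect (ii), or rather producing the clean constant $(t+1)(2\lambda)^d$ rather than $C(t+1)\lambda^d$, to be the main subtlety: without the first-passage refinement a plain skeleton count loses a factor $2$ in \eqref{eq:treeext2}.
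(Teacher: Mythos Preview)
The paper does not give its own proof of this lemma: it is quoted as a preliminary result from \cite{MVY13} (Lemma~5.1 there), along with Lemmas~\ref{lemma:starsurv}--\ref{lemma:starext}, and no argument is supplied. So there is nothing in the present paper to compare your attempt against.

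On its own merits, your argument is correct. Both inequalities are obtained from first-moment (skeleton-counting) bounds in the graphical construction, fed by the walk-counting estimate $W_{d+2j}(a,b)\le\binom{d+2j}{j}\Delta^{j}$ on trees, whose proof via the distance profile to the endpoint is fine. For \eqref{eq:treeext1} the computation $\P((z,0)\leftrightarrow(w,t))\le e^{-t}\sum_k W_k(z,w)\lambda^k t^k/k!\le e^{-(1-2\sqrt{\Delta}\lambda)t}$ is exactly the branching-random-walk comparison, and $2\sqrt{\Delta}\,\lambda\le 1/\sqrt2$ gives the exponent $1/4$. For \eqref{eq:treeext2} the anchoring/Mecke reduction $\P(\{x\}\times[0,t]\leftrightarrow\{y\}\times\mathbb{R}_+)\le(1+t)q$ is valid: the event launched from $(x,s'^{+})$ depends only on the graphical construction strictly after $s'$, so the Palm distribution of $D_x$ (which adds a mark at $s'$) leaves it unchanged. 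In the bound $\mathrm{FP}_{d+2j}(x,y)\le W_{d-1+2j}(x,u)$ you are implicitly using that in a tree the last step of a first-passage walk to $y$ must come from the unique neighbour $u$ on the $x$--$y$ geodesic, since any other neighbour of $y$ is separated from $x$ by $y$; this is worth saying explicitly. With $\binom{d-1+2j}{j}\le 2^{d-1}4^{j}$ and $4\Delta\lambda^2\le\tfrac12$ the geometric sum gives $q\le(2\lambda)^d$ on the nose.
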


\section{Proof of Theorem \ref{thm:main}}

\subsection{Construction of $G$}
Our graph $G$ will be equal to the one-dimensional lattice $\mathbb{Z}$ with the modification that a few vertices, denoted $o_1, o_2,\ldots$, are given extra neighbors, so that their degrees become increasingly large. The extra neighbors are vertices which we add to the graph as leaves.

We start defining sequences of integers $(o_i)_{i \geq 1}$, $(d_i)_{i \geq 1}$ satisfying
$$0 > o_1 > o_3 > \cdots,\qquad 0 < o_2 < o_4 < \cdots,\qquad 0 < d_1 < d_2 < \cdots. $$
The definition will be inductive. We set $d_1 = 1$, $o_1 = -1$ and $o_2 = 2$. Assume we have already defined $o_1,\ldots, o_i$ and $d_1, \ldots, d_{i-1}$. Then, set
\begin{align}\label{eq:choice_od}
o_{i+1} = \begin{cases}
o_{i-1} + i\cdot \sum_{j=1}^{(i-1)/2}d_{2j} &\text{if $i$ is odd},\\[.2cm]
o_{i-1} - i\cdot  \sum_{j=0}^{(i-2)/2} d_{2j+1}&\text{if $i$ is even,}
\end{cases}\qquad\qquad d_i = i\cdot  |o_i - o_{i+1}|.
\end{align}
We clearly have
\begin{equation}
d_i > i! \label{eq:di_large}
\end{equation}

Now, for each $i \geq 1$, let $\{x^i_{1},\ldots,x^i_{d_i}\}$ be a set with $d_i$ distinct elements (for distinct values of $i$, these sets are assumed to be disjoint). Then let
$$G = (V, E), \text{ with }\quad  V = \mathbb{Z} \cup \bigcup_{i=1}^\infty \{x^i_1,\ldots, x^i_{d_i}\},\quad E = E(\mathbb{Z}) \cup \bigcup_{i=1}^\infty \bigcup_{j=1}^{d_i} \{\{ o_i, x^i_j\}\},$$
where $E(\mathbb{Z})$ is the set of edges of $\mathbb{Z}$. The construction is illustrated on Figure~\ref{fig:graph}.
We let $e^*$ be the edge $\{ 0, 1\}$. When $e^*$ is removed, $G$ is split into two subgraphs: we let $G_-$ denote the one associated to the negative half-line, and $G_+$ the one associated to the positive half-line.

\begin{figure}[h]
\begin{center}
\def\svgwidth{450pt}
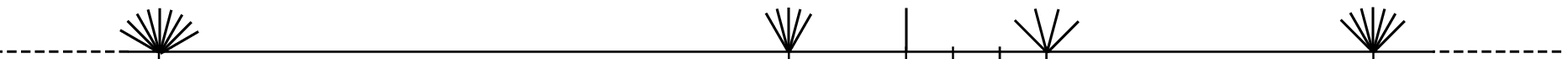\qquad
\caption{The graph $G$. \label{fig:graph}}
\end{center}
\end{figure}

The strategy in the construction of $G$, and in particular in the choice of $(d_i)$ and $(o_i)$, is as follows.
\begin{itemize}
\item As we will prove in the next subsection, for any $\lambda > 0$, if $i$ is sufficiently large, the star $B(o_i,1)$ is large enough to sustain the infection long enough that it reaches $B(o_{i+1},1)$ with high probability. The infection is then sustained there long enough to reach $B(o_{i+2},1)$ and so on. As the probability of the intersection of all these events is close to 1, there is survival. Note that, as observed in the Introduction, indeed the infection necessarily relies on infinitely many traversals of $e^*$ in order to survive.
\item In Subsection \ref{sub:ext}, we will show that, if $\lambda = \frac14$ and $e^*$ is absent, then the star $B(o_i, 1)$ is not quite large enough to hold the infection long enough to overcome the distance to $o_{i+2}$. Hence, it becomes increasingly difficult for the infection to travel from one star to the next in the same half-line, and consequently there is extinction.
\end{itemize}

\subsection{Proof of $\lambda_c(G) = 0$.} We need to show that, for any $\lambda > 0$, the contact process with rate $\lambda$ on $G$ survives. By \eqref{eq:x_to_y}, it is sufficient to show this for $\lambda \in (0,\lambda_0)$, where $\lambda_0$ is as in Lemma \ref{lemma:starreach}. 

Fix $\lambda \in (0,\lambda_0)$. As explained in the Introduction, it is enough to show that there exists a finite set $A \subset V$ such that \eqref{eq:prob_ext} is strictly less than 1. Assume $i \in \mathbb{N}$ is large enough that
\begin{equation*}
 \deg(o_i) = d_i + 2 > i \cdot |o_i - o_{i+1}| = i\cdot \text{dist}_G(o_i,o_{i+1}) >\frac 7 {c} \frac 1 {\lambda^2} \log \left( \frac 1 {\lambda} \right) \cdot \text{dist}_G(o_i,o_{i+1}).
 \end{equation*}
Then, by Lemma \ref{lemma:starreach},
\[\text{if } A \subseteq V,\;\frac{|A \cap B(o_i,1)|}{\lambda |B(o_i,1)|} > \frac{1}{16e},\quad \text{ then }
 \P_{G}^{\lambda}\left( \exists t: \frac{|\xi_t^{A} \cap B(o_{i+1},1)|}{\lambda \cdot |B(o_{i+1},1)|}>\frac 1 {16e} \right)>1-2e^{-c \lambda^2 d_i}.
\]
The desired result now follows from \eqref{eq:di_large}, the Strong Markov Property and a union bound.

\subsection{Proof of $\lambda_c(G_-),\lambda_c(G_+) \geq \frac14$.}\label{sub:ext}
We will only carry out the proof of $\lambda_c(G_+) \geq \frac14$; the proof for $G_-$ is similar. As explained in the Introduction, it is sufficient to show that, in the contact process on $G_+$ with rate 
\begin{equation}\label{eq:choice_of_lambda}\lambda = \frac14\end{equation} and started with a single infection located at vertex 1, the infection almost surely disappears:
\begin{equation}
\P^\lambda_{G_+}\left(\xi^1_t \neq \varnothing \;\forall t\right) = 0.\label{eq:want_to_prove}
\end{equation}

Define the sets of vertices
\begin{align*}&S_j = \{o_j, x^j_1,\ldots, x^j_{d_j}\}, \quad j \in \{2,4,\ldots\}\\
&H_0 = \{1\},\\
&H_j = (o_j, o_{j+2}) \cap \mathbb{Z}, \quad j \in \{2,4,\ldots\},\\
&G_i = H_0 \cup \left(\bigcup_{j=1}^{i/2} (S_{2j} \cup H_{2j})\right), \quad i \in \{2,4,\ldots\}.
\end{align*} 
We will abuse notation and refer to the above sets as subgraphs of $G_+$; for instance, $H_2$ will be the subgraph with vertex set defined above and set of edges having both extremities in this vertex set.

We now fix an arbitrary $i \in 2\mathbb{N}$. Define
\begin{equation}\label{eq:def_of_tau}
\tau = \exp \left\{\frac{i}{2}(d_2 +d_4 +\cdots + d_i) \right\}.
\end{equation}
We have
\begin{align}
\nonumber &\P^\lambda_{G_+}\left(\xi^1_t \neq \varnothing \;\forall t\right) \leq \P^\lambda_{G_+}\left(\xi^1_{\tau} \neq \varnothing\right)
\\&\nonumber \leq \P^\lambda_{G_+}\left(\xi^1_{\tau} \neq \varnothing,\; \xi^1_t \subseteq G_i \;\forall t \leq \tau\right) + \P^\lambda_{G_+}\left(\xi^1_{\tau} \neq \varnothing,\; \xi^1_t \nsubseteq G_i \;\text{ for some } t \leq \tau\right)
\\&\leq \P^\lambda_{G_i}\left(\xi^{G_i}_{\tau} \neq \varnothing\right) + \P^\lambda_{G_+}\left(\{o_i\}\times [0,\tau] \leftrightarrow \{o_{i+2}\} \times [0,\tau] \right).\label{eq:two_terms}
\end{align}

We bound the two terms on \eqref{eq:two_terms} separately, starting with the second:
\begin{align}
 \nonumber&\P^\lambda_{G_+}\left(\{o_i\}\times [0,\tau] \leftrightarrow \{o_{i+2}\} \times [0,\tau] \right)\\&\hspace{2cm}  \nonumber= \P^\lambda_{H_i \cup \{o_i,o_{i+2}\}}\left(\{o_i\}\times [0,\tau] \leftrightarrow \{o_{i+2}\} \times [0,\tau] \right) \\&\hspace{2cm} \nonumber\stackrel{\eqref{eq:treeext2}}{\leq} (\tau + 1)\cdot (2\lambda)^{\text{dist}(o_i,o_{i+2})}\\
 &\hspace{2cm} \stackrel{\eqref{eq:choice_od},\eqref{eq:choice_of_lambda},\eqref{eq:def_of_tau}}{\leq} 2 \exp\left\{i(d_2 +d_4 +\cdots + d_i) \left(\frac12 - \log(2) \right) \right\} < \exp\left\{-d_i\right\}\label{eq:very_last1}
\end{align}
if $i$ is large enough.

We now turn to the first term in \eqref{eq:two_terms}. First define
\begin{equation}\label{eq:def_times}
t_1 =3 \log \left(\frac 1 {\lambda}\right),\qquad  L = i\log(\text{dist}(o_i,o_{i+2})) = i\log\left(i(d_2 + d_4 + \cdots + d_i) \right).
\end{equation}
We will assume that $i$ is large enough (depending on $\lambda$) so that $L > t_1$.
Using the Markov property and \eqref{eq:x_to_y}, we have
\begin{equation}
\P^\lambda_{G_i}\left(\xi^{G_i}_{\tau} \neq \varnothing \right) \leq \P^\lambda_{G_i}\left(\xi^{G_i}_{L} \neq \varnothing \right)^{\lfloor \tau/L \rfloor}.
\label{eq:subadd}\end{equation}
We then bound
\begin{equation}\begin{split}
 &\P^\lambda_{G_i}\left(\xi^{G_i}_{L} = \varnothing \right) \\&\qquad\geq \prod_{j=1}^{i/2} \P^\lambda_{G_i}\left(\xi^{S_{2j}}_t \subseteq S_{2j} \;\forall t,\; \xi^{S_{2j}}_{t_1} = \varnothing \right) \cdot \prod_{j=0}^{i/2} \P^\lambda_{G_i}\left(\xi^{H_{2j}}_t \subseteq H_{2j} \;\forall t,\; \xi^{H_{2j}}_{L} = \varnothing \right).\end{split}\label{eq:two_more_terms}
\end{equation}
Now, for all $j \leq i/2$,
\begin{align}\nonumber
\P^\lambda_{G_i}\left(\xi^{H_{2j}}_t \subseteq H_{2j} \;\forall t,\; \xi^{H_{2j}}_{L} = \varnothing \right) & \geq \P^\lambda_{G_i}\left(\xi^{H_{2j}}_t \subseteq H_{2j} \;\forall t \right) - \P^\lambda_{H_{2j}}\left(\xi^{H_{2j}}_L \neq \varnothing \right)\\
\nonumber&\stackrel{\eqref{eq:rw_interval},\eqref{eq:treeext1}}{\geq} \frac12 - |H_{2j}|^2\cdot e^{-L/4} \\& \stackrel{\eqref{eq:def_times}}{=} \frac12 - (\text{dist}(o_i,o_{i+2}))^{2-i/4} \geq \frac14\label{eq:final_est1}
\end{align} 
if $i$ is large enough. Again for all $j \leq i/2$, we have
\begin{align}
&\nonumber\P^\lambda_{G_i}\left(\xi^{S_{2j}}_t \subseteq S_{2j} \;\forall t,\; \xi^{S_{2j}}_{t_1} = \varnothing \right) \\&\nonumber\hspace{2cm}\geq \P^\lambda_{S_{2j}}\left(\xi^{S_{2j}}_{t_1} = \varnothing \right) \cdot \P^\lambda_{G_i}\left(\left(D_{\{ o_{2j},o_{2j}-1\}}\cup D_{\{ o_{2j},o_{2j}+1\}}\right) \cap [0,t_1] = \varnothing \right)\\
&\hspace{2cm} \nonumber\stackrel{\eqref{eq:starext}}{\geq} \frac14\exp\left\{-16\lambda^2|S_{2j}| \right\}\cdot \exp\left\{-2\lambda t_1 \right\}\\
&\hspace{2cm} \stackrel{\eqref{eq:def_times}}{\geq} \frac{\lambda^{6\lambda}}{4} \exp\left\{-17\lambda^2 d_{2j}\right\}.\label{eq:final_est2}
\end{align}
Using \eqref{eq:final_est1} and \eqref{eq:final_est2} in \eqref{eq:two_more_terms}, we get
\begin{align*}\P^\lambda_{G_i}\left(\xi^{G_i}_{L} = \varnothing \right) &\geq \left(\frac{\lambda^{6\lambda}}{16}\right)^{\frac{i}{2}+1} \cdot \exp\left\{-17\lambda^2 (d_2 + d_4 + \cdots + d_{i}) \right\} \\&\stackrel{\eqref{eq:di_large}}{\geq}  \exp\left\{-18\lambda^2 (d_2 + d_4 + \cdots + d_{i}) \right\}\end{align*}
if $i$ is large enough; using this in \eqref{eq:subadd}, we get
\begin{align}\nonumber\P^\lambda_{G_i}\left(\xi^{G_i}_{\tau} \neq \varnothing \right) &\leq \exp\left\{-\exp\left\{-18\lambda^2(d_2 + d_4+\cdots + d_i) \right\}\cdot \frac{\exp\left\{\frac{i}{2}(d_2 + d_4+\cdots + d_i) \right\}}{i\log\left(i(d_2 + d_4 + \cdots + d_i) \right)} \right\} \\
&<\exp\{-d_i\} \label{eq:very_last2}
\end{align}
if $i$ is large enough.

In conclusion, using \eqref{eq:very_last1} and \eqref{eq:very_last2} in \eqref{eq:two_terms}, we see that $\P^\lambda_{G_+}\left(\xi^1_t \neq \varnothing \; \forall t\right) < 2\exp\{-d_i\}$ for all $i$, so \eqref{eq:want_to_prove} follows.

\vspace{.5cm}
\begin{center}\textbf{Acknowledgments}
\end{center}
The authors would like to thank Ori Gurel-Gurevich, F\'abio Machado and Pablo Rodriguez for helpful discussions and Aernout van Enter for suggestions on the writing of this work.



\begin{thebibliography}{1}

\bibitem{BBCS05}
N.\ Berger, C.\ Borgs, J.\ T.\ Chayes and A.\ Saberi
\emph{On the spread of viruses on the internet},
Proceedings of the 16th Symposium on Discrete Algorithms (2005), 301--310 MR-2298278

\bibitem{F71}
W.\ Feller,
\emph{An Introduction to Probability Theory and Its Applications},
Vol.\ 1 Wiley, 2.\ edition (1971)

\bibitem{H03}
S.\ J.\ Handjani
\emph{Inhomogeneous voter models in one dimension},
Journal of Theoretical Probability \textbf{16} (2003), no.\ 2, 325--338

\bibitem{J05}
P.\ Jung
\emph{The critical value of the contact process with added and removed edges},
Journal of Theoretical Probability \textbf{18} (2005), no.\ 4, 949--955

\bibitem{L85}
T.\ Liggett
\emph{Interacting Particle Systems},
Grundelheren der matematischen Wissenschaften \textbf{276}, Springer (1985), MR-0776231

\bibitem{L99}
T.\ Liggett
\emph{Stochastic Interacting Systems: Contact, Voter and Exclusion Processes},
Grundelheren der matematischen Wissenschaften \textbf{324}, Springer (1999), MR-1717346

\bibitem{MSS94}
N.\ Madras, R.\ Schinazi and R.\ H.\ Schonmann
\emph{On the critical behavior of the contact process in deterministic inhomogeneous environments},
The Annals of Probability \textbf{22} (1994), no.\ 3, 1140--1159

\bibitem{MVY13}
T.\ Mountford, D.\ Valesin and Q.\ Yao
\emph{Metastable densities for the contact process on power law random graphs},
Electronic Journal of Probability \textbf{18} (2013), no.\ 103, 1--36

\bibitem{NV96} C.\ Newman and S. Volchan \emph{Persistent survival of one-dimensional contact processes in random environments}, The Annals of Probability \textbf{24} (1996), no.\ 1, 411--421

\bibitem{PS01}
R.\ Pemantle and A.\ M.\ Stacey
\emph{The branching random walk and contact process on non-homogeneous and Galton--Watson trees},
The Annals of Probability \textbf{29} (2001), no.\ 4, 1563--1590

\end{thebibliography}
\end{document}